\theoremstyle{plain}
\newtheorem{theorem}{Theorem}[section]
\newtheorem{lemma}[theorem]{Lemma}
\newtheorem{proposition}[theorem]{Proposition}
\theoremstyle{definition}
\theoremstyle{remark}
\begin{document}

	\title[Characterization of rings with genus two prime ideal sum graphs]{Characterization of rings with genus two prime ideal sum graphs}
	
	\author[Praveen Mathil, Jitender Kumar]{Praveen Mathil, Jitender Kumar*}
	\address{Department of Mathematics, Birla Institute of Technology and Science Pilani, Pilani-333031, India}
	\email{maithilpraveen@gmail.com, jitenderarora09@gmail.com}

\begin{abstract}
Let $R$ be a commutative ring with unity. The prime ideal sum graph of the ring $R$ is a simple undirected graph whose vertex set is the set of nonzero proper ideals of $R$ and two distinct vertices $I$ and $J$ are adjacent if and only if $I + J$ is a prime ideal of $R$. In this paper, we characterize all the finite non-local commutative rings whose prime ideal sum graph is of genus $2$.
\end{abstract}

\subjclass[2020]{05C25}
\keywords{Non-local ring, ideals, genus a graph.\\ *  Corresponding author}
\maketitle

\section{Historical Background and Main Result}
Algebraic graph theory is a branch of mathematics that studies algebraic structures using techniques from graph theory and graphs from an algebraic perspective. In the last three decades, research on algebraic graphs has grown significantly, with numerous advances in theory and applications. It has numerous applications in computer science including computer networks, image processing, and data mining.  Many researchers attached graphs to various algebraic structure, viz. semigroups, groups, rings, vector spaces, etc., and studied their properties through graphs. Various algebraic graphs associated to ring structures have been defined and investigated by the researchers (see, \cite{afkhami2012generalized, anderson2008total, anderson1999zero, behboodi2011annihilating, biswas2022subgraph, maimani2008comaximal, nazim2022essential, nazim2023some}). Because of the vital role of ideals in the theory of rings, numerous authors have studied the graphs on rings with respect to their ideals such as inclusion ideal graph \cite{akbari2015inclusion}, intersection graphs of ideals \cite{chakrabarty2009intersection}, ideal-relation graph \cite{ma2016automorphism}, prime ideal sum graph \cite{saha2022prime}, reduced cozero-divisor graph \cite{wilkens2011reduced}, co-maximal ideal graph \cite{ye2012co} etc.

The genus $g( \Gamma)$ of a graph $\Gamma$ is a topological invariant that measures the complexity of its embedding on a surface. It is defined as the minimum number of handles (i.e., topological spheres with holes) that are needed to embed the graph on a surface without edge crossings. The genus of a graph is a fundamental concept in topology, and has important applications in physics and computer science. Determining the genus of a graph is a fundamental but challenging problem. Indeed, it is NP-complete. Several authors have considered the problem of determining the genus of various algebraic graphs. All genus one graphs that are intersection graphs of rings are classified in \cite{pucanovic2014toroidality}. Moreover, an improvement over the existing results concerning the planarity of intersection graphs of rings is also presented. Further, Pucanovi\'{c} \emph{et al.} \cite{pucanovic2014genus} classified the commutative rings for which the intersection graph of ideals has genus two. All the rings with genus one (or two) reduced cozero-divisor graphs have been classified in \cite{jesili2022genus, mathil2022characterization}. More work related to the genus of graphs defined on rings can be found in \cite{akbari2003zero, asir2020classification, dorbidi2016some, kalaimurugan2022genus, maimani2012rings,rehman2022planarity,nazim2023genus, selvakumar2022genus, krishnan2018classification, chelvam2013genus, wang2006zero, wickham2008classification, wickham2009rings}.

 The prime ideal sum graph of a commutative ring was introduced by Saha \emph{et al.} \cite{saha2022prime}. The prime ideal sum graph $\text{PIS}(R)$ of a commutative $R$ is a simple undirected graph whose vertices are nonzero proper ideals of $R$ and two distinct vertices $I$ and $J$ are adjacent if and only if $I + J$ is a prime ideal of $R$. In this connection, the authors explored the graph theoretic properties of $\text{PIS}(R)$ such as clique number, chromatic number, and the domination number of $\text{PIS}(R)$. Further, the planar graphs which may occur as prime ideal sum graph of non-local commutative rings have been studied in \cite{mathil2022embedding}. Moreover, together with forbidden graph classes classification of $\text{PIS}(R)$, Mathil \emph{et al.} \cite{mathil2022embedding} have characterized the non-local commutative rings $R$ such that $\text{PIS}(R)$ is of genus one. In this paper, we continue the research on prime ideal sum graphs. The aim of this manuscript is to characterize the finite non-local commutative rings $R$ such that the genus of $\text{PIS}(R)$ is two. The main result of this paper is as follows.

\begin{theorem}\label{genus2R_1R_2}
Let $R \cong R_1 \times R_2 \times \cdots \times R_n$ ($n \ge 2$) be a non-local commutative ring, where each $R_i$ is a local ring with maximal ideal $\mathcal{M}_i$. Then $g(\textnormal{PIS}(R)) = 2$ if and only if one of the following holds:
\begin{enumerate}[\rm(i)]
    \item $R \cong R_1 \times F_2$ such that $\mathcal{M}_1 = \langle x,y \rangle$ and $x^2 = y^2 = 0$.
    
    \item $R \cong  R_1 \times R_2$, where $R_1$ and $R_2$ are principal rings such that $\eta(\mathcal{M}_1) \in \{4,5 \}$ and $\eta(\mathcal{M}_2) =2$.
    \end{enumerate}
\end{theorem}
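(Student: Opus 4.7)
My plan is to split the theorem into its two implications and, for the necessity direction, proceed by elimination on the number of factors $n$ and then on the internal ideal structure of the factors. The fundamental tools will be the classical lower bounds
\[
g(K_n)=\left\lceil\frac{(n-3)(n-4)}{12}\right\rceil,\qquad g(K_{m,n})=\left\lceil\frac{(m-2)(n-2)}{4}\right\rceil,
\]
which give in particular $g(K_8)=2$, $g(K_9)=3$, $g(K_{4,5})=2$, and $g(K_{5,5})=3$, together with the genus-one classification of $\textnormal{PIS}(R)$ proved by Mathil \emph{et al.}~\cite{mathil2022embedding}. The governing combinatorial fact, used everywhere, is that ideals of $R_1\times\cdots\times R_n$ are tuples $(I_1,\ldots,I_n)$ and two such ideals $I,J$ are adjacent in $\textnormal{PIS}(R)$ if and only if there is exactly one index $k$ with $I_k+J_k=\mathcal{M}_k$ while $I_j+J_j=R_j$ for all $j\ne k$. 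All subsequent work is a search for large forbidden subgraphs using this rule.

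For the sufficiency direction, in each of (i) and (ii) the ring has only finitely many ideals that I can enumerate by hand, so I would display concrete embeddings of $\textnormal{PIS}(R)$ on the double torus by giving rotation systems and confirming the Euler-characteristic count. To upgrade the resulting upper bound $g(\textnormal{PIS}(R))\le 2$ to equality, I would appeal to the prior genus-one classification to exclude $g\le 1$. For the converse I would eliminate cases. If $n\ge 4$, I would assemble enough ideals supported on a single coordinate (for example tuples with $\mathcal{M}_k$ in one slot and $0$ or $R_j$ elsewhere) and verify that enough of them sum pairwise to one of the $n$ primes of $R$ to contain a copy of $K_{5,5}$ or $K_9$, forcing $g\ge 3$. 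If $n=3$, already the six nontrivial ideals present when all $R_i$ are fields can be analysed directly, while any enlargement of a factor produces enough extra ideals to host $K_{4,5}$ as a subgraph; this rules out $n=3$ altogether.

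The heart of the argument is the $n=2$ case, which I would stratify by the generator structure of $\mathcal{M}_1$ and $\mathcal{M}_2$. When both maximal ideals require at least two generators, the lattice of proper ideals is rich enough (one already sees a pencil of rank-one ideals in each factor) to host $K_{5,5}$, so $g\ge 3$. When only one maximal ideal, say $\mathcal{M}_1$, requires multiple generators, I would show that every possibility except $\mathcal{M}_1=\langle x,y\rangle$ with $x^2=y^2=0$ and $R_2$ a field already contains a forbidden subgraph, and then draw the embedding in the surviving ring to verify (i). When both $R_1,R_2$ are principal local rings, their ideals form chains of lengths governed by $\eta(\mathcal{M}_1),\eta(\mathcal{M}_2)$, and a direct count of ideals $(I,J)\subseteq R_1\times R_2$ with $I+J$ prime shows that once the chains are sufficiently long, a $K_{4,5}$ or $K_{5,5}$ appears, restricting us exactly to $\eta(\mathcal{M}_2)=2$ and $\eta(\mathcal{M}_1)\in\{4,5\}$. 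The main obstacle I anticipate lies at the boundary of case (ii): the would-be enlargements $\eta(\mathcal{M}_1)=6$ or $\eta(\mathcal{M}_2)=3$ must be ruled out by locating an explicit forbidden subgraph, not by crude inequalities, while the admissible cases require carefully drawn rotation systems on the double torus and Euler-characteristic verifications to certify the upper bound $g=2$.
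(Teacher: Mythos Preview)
Your broad strategy --- eliminate $n\ge 3$, then stratify the $n=2$ case by the ideal structure of the factors, and certify the surviving rings by explicit double-torus embeddings --- matches the paper's. But there is a concrete gap and a missing technique.

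\textbf{The gap.} In the $n=3$ case you write that any enlargement of a factor ``produces enough extra ideals to host $K_{4,5}$ as a subgraph; this rules out $n=3$ altogether.'' But $g(K_{4,5})=2$, so a $K_{4,5}$ subgraph only forces $g\ge 2$; it cannot exclude $g=2$. The same issue recurs in your principal $n=2$ analysis, where you say ``a $K_{4,5}$ or $K_{5,5}$ appears'' to bound $\eta(\mathcal{M}_1)$: to push past the boundary $\eta(\mathcal{M}_1)=6$ or $\eta(\mathcal{M}_2)=3$ you need $g\ge 3$, and $K_{4,5}$ does not give that. Likewise, for $n=4$ with all $R_i$ fields there are only $14$ proper nonzero ideals and it is not at all clear that a genuine $K_{5,5}$ or $K_9$ sits inside $\textnormal{PIS}(F_1\times F_2\times F_3\times F_4)$; the paper does not find one.

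\textbf{The missing technique.} The paper's engine for crossing the threshold from $g\ge 2$ to $g\ge 3$ in all of these borderline cases is not a single larger forbidden minor. Instead it (a) exhibits a subgraph $G_0$ of known genus --- often two disjoint copies of a $K_{3,3}$-subdivision sharing a cut vertex, so that additivity of genus over blocks yields $g(G_0)=2$ --- and then (b) argues by a face-insertion analysis that specific remaining vertices, because of their common neighbours in $G_0$, cannot be placed into any $\mathbb{S}_2$-embedding of $G_0$ without crossings. This two-step ``embed a core, then show the rest does not fit'' argument is used for $F_1\times F_2\times F_3\times F_4$, for $R_1\times F_2\times F_3$ with $\eta(\mathcal{M}_1)=3$, for $R_1\times R_2$ with $\eta(\mathcal{M}_1)\ge 6$, and for $\eta(\mathcal{M}_1)=\eta(\mathcal{M}_2)=3$. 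You correctly anticipate that the boundary cases of (ii) are delicate, but your stated plan --- locate an explicit forbidden subgraph --- is not what carries the argument; you will need the block-additivity lemma and these face-by-face insertion obstructions.
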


  \section{preliminaries}
In this section, we recall some graph theoretic notions from  \cite{westgraph, white1985graphs}. A \emph{graph} $\Gamma$ is an ordered pair $(V(\Gamma), E(\Gamma))$, where $V(\Gamma)$ is the set of vertices and $E(\Gamma)$ is the set of edges of $\Gamma$. Two distinct vertices $u, v \in V(\Gamma)$ are $\mathit{adjacent}$ in $\Gamma$, denoted by $u \sim v$ (or $(u,  v)$), if there is an edge between $u$ and $v$. Otherwise, we write as $u \nsim v$. A graph $\Gamma' = (V(\Gamma'), E(\Gamma'))$ is said to be a \emph{subgraph} of the graph $\Gamma$ if $V(\Gamma') \subseteq V(\Gamma)$ and $E(\Gamma') \subseteq E(\Gamma)$. If $X \subseteq V(\Gamma)$ then the subgraph $\Gamma(X)$ induced by $X$ is the graph with vertex set $X$ and two vertices of $\Gamma(X)$ are adjacent if and only if they are adjacent in $\Gamma$. A graph $\Gamma$ is said to be \emph{complete} if any two vertices are adjacent in $\Gamma$. The complete graph on $n$ vertices is denoted by $K_n$. The \emph{complete bipartite graph} $K_{m,n}$ is a graph $\Gamma$ if the vertex set $V(\Gamma)$ of $\Gamma$ can be partitioned into two nonempty sets $A$ and $B$, where $|A|= m$ and $|B| = n$, such that two distinct vertices of $\Gamma$ are adjacent if and only if they belong to different partition sets.

 The \emph{subdivision} of an edge $(u,v)$ in a graph $\Gamma$ is an operation that deletes the edge $(u,v)$ from $\Gamma$ and add two new edges $(u,w)$ and $(w,v)$ along with a new vertex $w$. The \emph{subdivision of $\Gamma$}, is a graph obtained from $\Gamma$ by a sequence of edge subdivisions. The \emph{edge contraction} of the edge $(u,v)$ is a single vertex, denoted by $[u,v]$, which is obtained by merging the endpoints $u$ and $v$. Also, $[u,v]$ is adjacent to all the edges that were adjacent to $u$ or $v$. Two graphs are \emph{homeomorphic} if both can be obtained from the same graph by subdivision or contracting of edges. A graph $\Gamma$ is said to be \emph{planar} if it can be drawn on a plane without edge crossings.

A compact connected topological space such that each point has a neighbourhood homeomorphic to an open disc is called a surface. For a non-negative integer $g$, let $\mathbb{S}_{g}$ be the orientable surface with $g$ handles. The genus $g(\Gamma)$ of a graph $\Gamma$ is the minimum integer $g$ such that the graph can be embedded in $\mathbb{S}_{g}$ i.e., the graph $\Gamma$ can be drawn into a surface $\mathbb{S}_{g}$ with no edge crossing. Note that the graphs having genus $0$ are planar and the graphs having genus one are toroidal. The following results are useful in the sequel.

\begin{proposition}{\cite[Ringel and Youngs]{white1985graphs}}
\label{genus}
 Let $m, n$ be positive integers. 
 \begin{enumerate}
     \item[{\rm(i)}]If $n \ge 3$, then $g(K_n) = \left\lceil \frac{(n-3)(n-4)}{12} \right\rceil$.
     \item[{\rm(ii)}]If $m, n\geq 2$, then $g(K_{m,n}) =  \left\lceil \frac{(m-2)(n-2)}{4}  \right\rceil$.
 \end{enumerate}
 \end{proposition}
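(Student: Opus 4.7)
The plan is to prove both directions by a structural case analysis on the factorization $R \cong R_1\times\cdots\times R_n$, using Proposition \ref{genus} to extract forbidden complete and complete bipartite subgraphs that force $g(\text{PIS}(R))\ge 3$, and using explicit $\mathbb{S}_2$-embeddings for the two positive cases. The existing characterization of genus one prime ideal sum graphs from the authors' earlier work on $\text{PIS}(R)$ will be invoked freely, so in effect I only need to separate "$g=2$" from "$g\ge 3$" and from "$g\le 1$".

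First I would reduce the factor count. For an ideal $I\times J$ of $R_1\times R_2$, adjacency in $\text{PIS}(R_1\times R_2)$ is governed by whether one coordinate saturates to the whole ring and the other to a maximal ideal, since in a finite local ring the unique prime is the maximal ideal. Consequently the $K_{n,n}$-type "cross" between the family $\{\mathcal{M}_i\times \prod_{j\ne i}R_j : i\}$ and the componentwise maximal ideal products yields, for $n\ge 3$, a $K_{m,m}$ of parameters that already exceed the genus-$2$ bound $\lceil (m-2)(m-2)/4\rceil\le 2$ unless the ring is trivially small; I would check the remaining small cases $R_1\times R_2\times R_3$ with each $R_i$ a field of order $2$ by hand, where $\text{PIS}(R)$ is small enough to draw and is known to be toroidal (genus $1$). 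This rules out $n\ge 3$ entirely.

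Next I specialize to $n=2$ and split according to which of $R_1,R_2$ are fields. If both $R_1,R_2$ are fields, $\text{PIS}(R)$ has only the two vertices $R_1\times 0$ and $0\times R_2$, giving genus $0$. If exactly one factor, say $R_2=F_2$ (a field), is a field, I would parameterize the vertices of $\text{PIS}(R_1\times F_2)$ as $\{I\times 0,\ I\times F_2,\ 0\times F_2,\ R_1\times 0\}$ for $I$ ranging over nonzero ideals of $R_1$, then note that every vertex of the form $I\times F_2$ is adjacent to every vertex of the form $J\times 0$ (because $(I+J)\times F_2=\mathcal{M}_1\times F_2$ as soon as $I+J=\mathcal{M}_1$ or $=R_1$), producing large bipartite and complete substructures. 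Counting nonzero ideals in $R_1$ shows that if $R_1$ has more than two nonzero proper ideals beyond $\mathcal{M}_1$, the forced $K_{m,n}$ violates Proposition \ref{genus}(ii); what survives is exactly the case $R_1\cong R_1$ with $\mathcal{M}_1=\langle x,y\rangle$ and $x^2=y^2=0$ from part (i), for which an explicit embedding in $\mathbb{S}_2$ (to be drawn using a polygonal representation of the double torus with the identified vertices on its boundary) must be exhibited, together with a verification that the induced subgraph contains a $K_{3,4}$ (say) so that genus is at least $2$.

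Finally, if neither factor is a field, write $R\cong R_1\times R_2$ with both $R_i$ proper local rings. Using the equivalence (standard for finite local rings) between being principal and the ideals forming a chain $\mathcal{M}\supsetneq\mathcal{M}^2\supsetneq\cdots\supsetneq\mathcal{M}^{\eta(\mathcal{M})-1}\supsetneq 0$, I would show that if $\mathcal{M}_1$ or $\mathcal{M}_2$ requires two generators, the vertex set is so large that a $K_{3,5}$ or $K_6$ appears among the ideals $\mathcal{M}_1^a\times 0$ and $R_1\times \mathcal{M}_2^b$, forcing $g\ge 3$. Hence both $R_i$ are principal. Under this restriction the vertex set of $\text{PIS}(R)$ has exactly $\eta(\mathcal{M}_1)\eta(\mathcal{M}_2)+\eta(\mathcal{M}_1)+\eta(\mathcal{M}_2)-1$ vertices, and a direct enumeration of edges, combined with Proposition \ref{genus} applied to the bipartition $\{\mathcal{M}_1^a\times J\}$ vs $\{I\times \mathcal{M}_2^b\}$, shows that $g=2$ forces $\eta(\mathcal{M}_2)=2$ and $\eta(\mathcal{M}_1)\in\{4,5\}$; the remaining small values $\eta(\mathcal{M}_1)\le 3$ would land us in genus $\le 1$ from the previous paper, while $\eta(\mathcal{M}_1)\ge 6$ produces a $K_{4,5}$ of genus $\ge 3$. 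For the two positive subcases $\eta(\mathcal{M}_1)\in\{4,5\}$ I would complete the proof by drawing embeddings on $\mathbb{S}_2$ and verifying a forbidden subgraph (e.g.\ $K_{3,5}$ or $K_{4,4}$) that rules out genus $1$.

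The main obstacle will be the two constructive $\mathbb{S}_2$-embeddings in cases (i) and (ii): the vertex sets are moderately large (on the order of $10$–$15$ vertices with many edges), and one must produce face-valid rotation systems or octagonal diagrams. The other genuinely subtle point is excluding borderline values—particularly showing that $(\eta(\mathcal{M}_1),\eta(\mathcal{M}_2))=(3,3)$ or $(6,2)$ has genus strictly greater than $2$—which will require a careful forbidden-subgraph argument rather than a crude $K_n$ or $K_{m,n}$ count.
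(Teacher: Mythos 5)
Your proposal does not address the statement at hand. The statement is Proposition~\ref{genus}, the Ringel--Youngs genus formulas $g(K_n)=\left\lceil\frac{(n-3)(n-4)}{12}\right\rceil$ and $g(K_{m,n})=\left\lceil\frac{(m-2)(n-2)}{4}\right\rceil$. What you have written is instead an outline of Theorem~\ref{genus2R_1R_2}, the paper's main classification result; indeed your very first sentence announces that you will be ``using Proposition~\ref{genus}'' as a tool, which would be circular if the goal were to prove Proposition~\ref{genus} itself. Nothing in your text engages with the actual content of the proposition: there is no construction of genus embeddings of $K_n$ or $K_{m,n}$ meeting the Euler lower bound, no mention of rotation systems or current/voltage graph constructions, and no treatment of the residue-class case analysis that the complete-graph formula (equivalent to the Heawood map-coloring problem) famously requires.

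For the record, the paper does not prove this proposition either --- it cites it from White's book, attributing it to Ringel and Youngs, which is the appropriate thing to do: the lower bounds follow from an Euler-formula edge count, but the matching upper bounds constitute a substantial body of classical work (completed by Ringel and Youngs in 1968 across twelve residue classes for $K_n$, and by Ringel for $K_{m,n}$) that no short argument of the kind you sketch could reproduce. If your intent was to prove the main theorem, you should be reviewing that statement instead; as a proof of Proposition~\ref{genus}, the proposal contains no relevant idea.
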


\begin{lemma}\cite{white2001graphs}
\label{genusofblocks}
The genus of a connected graph $\Gamma$ is the sum of the genera of its blocks.
\end{lemma}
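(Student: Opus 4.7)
The plan is to prove the biconditional by a case analysis on $n$ and on the sizes and structures of the $\mathcal{M}_i$. Throughout I would exploit that the ideals of a product $R_1 \times \cdots \times R_n$ are themselves products of ideals, that sums decompose componentwise, and that the prime ideals of the product are exactly the $R_1 \times \cdots \times R_{i-1} \times \mathcal{M}_i \times R_{i+1} \times \cdots \times R_n$; this reduces the adjacency condition in $\text{PIS}(R)$ to a simple check on components. The genus formulas for $K_n$ and $K_{m,n}$ from Proposition \ref{genus} will produce sharp lower bounds, and the block-additivity from Lemma \ref{genusofblocks} will localize the genus calculations to individual blocks.

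For the necessity direction, I would first argue that $n = 2$. When $n \geq 3$, one can select ideals of the form $I \times R_2 \times R_3 \times \cdots$, $R_1 \times J \times R_3 \times \cdots$, together with the ideals $\mathcal{M}_1 \times R_2 \times \cdots$, $R_1 \times \mathcal{M}_2 \times \cdots$, etc., whose pairwise sums realize a complete bipartite (or complete) subgraph large enough to force $g(\text{PIS}(R)) \geq 3$ via Proposition \ref{genus}. Having reduced to $n = 2$, I would assume without loss of generality that $\eta(\mathcal{M}_2) \leq \eta(\mathcal{M}_1)$ and split according to $\eta(\mathcal{M}_2)$. If $R_2$ is a field and $R_1$ is principal, the ideals of $R_1$ form a chain of length $\eta(\mathcal{M}_1)$, so the proper nonzero ideals of $R$ are readily enumerated and $\text{PIS}(R)$ is transparent; checking the genus against Proposition \ref{genus} pins down $\eta(\mathcal{M}_1) \in \{4,5\}$ as the only values giving genus exactly $2$. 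If $R_2$ is a field but $R_1$ is non-principal, a careful enumeration of ideals forces $\mathcal{M}_1$ to have exactly two generators satisfying $x^2 = y^2 = 0$ (case (i)), because any richer non-principal structure exhibits a subgraph of genus at least $3$. Finally, if $\eta(\mathcal{M}_2) \geq 2$ and $R$ is not of the form in (ii), an analogous forbidden-subgraph argument based on a large induced $K_{m,n}$ rules out every remaining possibility.

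For the sufficiency direction, in each of the two listed cases I would exhibit the vertex/edge structure of $\text{PIS}(R)$ explicitly, decompose it into blocks via cut-vertex analysis, and match upper and lower bounds on the genus. The upper bound $g \leq 2$ comes from an explicit drawing on $\mathbb{S}_2$, presented as a disk-with-two-handles diagram; the lower bound $g \geq 2$ comes from locating a block that contains an induced subdivision of $K_{3,4}$ or $K_5$ together with enough extra edges to push beyond genus $1$, or directly via the Euler-type inequality $|E| \leq 3|V| + 6(g-1)$ applied to that block. The \emph{main obstacle} is case (i), where $R_1$ is non-principal: the ideals of $R_1$ include $0$, $\langle x\rangle$, $\langle y\rangle$, $\langle x+y\rangle$ (plus further rank-one ideals when $|R_1/\mathcal{M}_1| > 2$), $\mathcal{M}_1$, and $R_1$, and the adjacency pattern in $\text{PIS}(R)$ is intricate enough that both the explicit $\mathbb{S}_2$ embedding and the matching lower bound must be produced by careful hand-construction rather than any clean general formula. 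A secondary but less severe obstacle is the borderline calculation in case (ii) with $\eta(\mathcal{M}_1) = 5$, where the graph is large enough that Euler's inequality alone is insufficient and a concrete block decomposition is needed to certify that the genus is exactly $2$ and not $3$.
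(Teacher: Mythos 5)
Your proposal does not address the statement you were asked to prove. The statement is Lemma~\ref{genusofblocks}: the purely graph-theoretic fact that the genus of a connected graph equals the sum of the genera of its blocks (the classical additivity theorem of Battle, Harary, Kodama and Youngs, which the paper simply cites from White's book). What you have written instead is an outline of a proof of the paper's main result, Theorem~\ref{genus2R_1R_2}, on classifying non-local rings with genus-two prime ideal sum graphs. Nothing in your text engages with blocks, cut vertices, or the additivity of genus; indeed your sketch \emph{invokes} Lemma~\ref{genusofblocks} as a tool, so it cannot serve as a proof of it without circularity.

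A correct proof of the actual statement would have two halves. The easy direction is the upper bound: given minimal-genus embeddings of the blocks $B_1,\dots,B_k$, one amalgamates them at the cut vertices by excising a small disc around the relevant vertex in one embedded surface and attaching the next surface along it, producing an embedding of $\Gamma$ in a surface of genus $\sum_i g(B_i)$. The hard direction is the lower bound: one must show that an arbitrary embedding of $\Gamma$ in $\mathbb{S}_g$ forces $g \ge \sum_i g(B_i)$; the standard argument proceeds by induction on the number of blocks, analyzing the rotation at a cut vertex to split the embedding into embeddings of the two pieces and controlling the Euler characteristic. None of these ideas appears in your proposal, so as a proof of Lemma~\ref{genusofblocks} it is entirely missing.
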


 Throughout the paper, the ring $R$ is a finite non-local commutative ring with unity and $F_i$ denotes a finite field. For basic definitions of ring theory, we refer the reader to  \cite{atiyah1994introduction}. By $\mathcal{I}^*(R)$, we mean the set of non-zero proper ideals of $R$. The \emph{nilpotency index} $\eta(I)$ of an ideal $I$ of $R$ is the smallest positive integer $n$ such that $I^n = 0$. A ring $R$ is said to be \emph{local} if it has a unique maximal ideal. 
 By the structural theorem \cite{atiyah1994introduction}, a finite non-local commutative ring $R$ is uniquely (up to isomorphism) a finite direct product of local rings $R_i$ that is $R \cong R_1 \times R_2 \times \cdots \times R_n$, where $n \geq 2$. The following results of the prime ideal sum graph are useful in the sequel.


\begin{theorem}{\cite[Theorem 4.1]{mathil2022embedding}}\label{Planar_primeidealsum}
 Let $R \cong R_1 \times R_2 \times \cdots \times R_n$ ($n \geq 2$) be a non-local commutative ring, where each $R_i$ is a local ring with maximal ideal $\mathcal{M}_i$. Then the graph $\textnormal{PIS}(R)$ is planar if and only if one of the following holds:
 \begin{enumerate}[\rm(i)]
    \item $R \cong  F_1 \times F_2 \times F_3$;
    
    \item $R \cong  F_1 \times F_2$;
    \item $R \cong  R_1 \times R_2$ such that $\mathcal{I}^*(R_1) = \{ \mathcal{M}_1 \}$ and  $\mathcal{I}^*(R_2) = \{ \mathcal{M}_2 \}$;
    \item $R \cong  F_1 \times R_2$, where the local ring $R_2$ is principal.
    
    \end{enumerate}
\end{theorem}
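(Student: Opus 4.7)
The plan is to prove the two directions separately, relying on Theorem \ref{Planar_primeidealsum} to discharge the planar cases and on Proposition \ref{genus} together with Lemma \ref{genusofblocks} for genus estimates.

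For sufficiency, I would handle each of (i) and (ii) by four steps: (1) enumerating $\mathcal{I}^*(R)$ explicitly from the given ideal-lattice data; (2) determining the edge set of $\textnormal{PIS}(R)$ via the criterion that $I + J$ is a prime ideal, i.e.\ a maximal ideal of the form $R_1 \times \mathcal{M}_2$ or $\mathcal{M}_1 \times R_2$; (3) drawing an explicit crossing-free embedding of $\textnormal{PIS}(R)$ on the double torus $\mathbb{S}_2$ to obtain $g(\textnormal{PIS}(R)) \leq 2$; and (4) exhibiting a subgraph of genus exactly $2$ (such as $K_{4,5}$, $K_{3,7}$, or $K_8$, whose genera are $2$ by Proposition \ref{genus}) to obtain $g(\textnormal{PIS}(R)) \geq 2$. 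In case (i) the subcase analysis depends on whether $xy$ vanishes or not, since this affects the ideal lattice of $R_1$; in case (ii) the ideals of each $R_i$ form a chain $R_i \supsetneq \mathcal{M}_i \supsetneq \mathcal{M}_i^2 \supsetneq \cdots \supsetneq 0$ of length $\eta(\mathcal{M}_i)$, so $\mathcal{I}^*(R)$ is immediately listed and the adjacency pattern computed directly.

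For necessity, assume $g(\textnormal{PIS}(R)) = 2$ with $R \cong R_1 \times \cdots \times R_n$. Theorem \ref{Planar_primeidealsum} rules out the planar types, and I argue by the number of factors. For $n \geq 4$, the ideals of the form $R_1 \times \cdots \times R_{i-1} \times 0 \times R_{i+1} \times \cdots \times R_n$, together with ``single-coordinate'' ideals $0 \times \cdots \times \mathcal{M}_i \times \cdots \times 0$ or $0 \times \cdots \times R_i \times \cdots \times 0$, yield a $K_{5,5}$ or $K_9$ subgraph of $\textnormal{PIS}(R)$, which has genus $3$ by Proposition \ref{genus}, contradicting $g=2$. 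For $n = 3$, the planar case $F_1 \times F_2 \times F_3$ is already excluded by Theorem \ref{Planar_primeidealsum}, and for every other non-planar configuration a similar genus-$3$ subgraph appears among the product ideals, ruling out $n = 3$. Finally for $n = 2$, I split by whether each $R_i$ is a field, a principal local ring, or a non-principal local ring, and in every case outside (i) and (ii) I would exhibit a genus-$3$ subgraph, typically $K_{5,5}$ or $K_{4,7}$ ($g=3$ by Proposition \ref{genus}), built from the augmented ideal lattice.

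The main obstacle is the delicate $n = 2$ analysis, where the thresholds in (ii) and the boundary of (i) must be pinpointed sharply. Concretely, at $\eta(\mathcal{M}_1) = 6$ with $\eta(\mathcal{M}_2) = 2$, at $\eta(\mathcal{M}_2) \geq 3$ with $\eta(\mathcal{M}_1) \geq 3$, or when $R_1$ is non-principal with $\mathcal{M}_1$ requiring three or more generators, one must explicitly identify a genus-$3$ subgraph of $\textnormal{PIS}(R)$ coming from the extra ideals. Constructing the matching double-torus embeddings for (i) and (ii) is tedious but routine, guided by the block decomposition from Lemma \ref{genusofblocks} to reduce to genus computations on smaller cut-vertex blocks, after which the Ringel--Youngs bounds of Proposition \ref{genus} confirm equality.
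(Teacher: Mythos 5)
Your proposal does not prove the statement at hand. The statement is the \emph{planarity} characterization of $\textnormal{PIS}(R)$: four cases (i)--(iv), with the conclusion that $\textnormal{PIS}(R)$ has genus $0$ exactly in those cases. What you have outlined is instead a proof of the genus-two characterization, Theorem \ref{genus2R_1R_2}: the two cases you call ``(i)'' and ``(ii)'' are the conditions $\mathcal{M}_1=\langle x,y\rangle$ with $x^2=y^2=0$ and $\eta(\mathcal{M}_1)\in\{4,5\}$, $\eta(\mathcal{M}_2)=2$ (neither of which appears in the planarity statement, whose case (i) is $F_1\times F_2\times F_3$ and whose case (ii) is $F_1\times F_2$); your sufficiency step builds embeddings in $\mathbb{S}_2$ rather than in the plane; and your necessity argument starts from the hypothesis $g(\textnormal{PIS}(R))=2$. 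None of this bears on when $\textnormal{PIS}(R)$ is planar. Moreover, your argument explicitly invokes Theorem \ref{Planar_primeidealsum} ``to discharge the planar cases,'' i.e.\ it assumes the very statement to be proved, so even reinterpreted charitably the reasoning is circular with respect to the assigned task.

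A proof of the actual statement would run along entirely different lines: for sufficiency, enumerate $\mathcal{I}^*(R)$ in each of the four cases (for instance, for $F_1\times R_2$ with $R_2$ principal the nonzero proper ideals are the two chains $\langle 0\rangle\times\mathcal{M}_2^j$ and $F_1\times\mathcal{M}_2^j$ together with $F_1\times\langle 0\rangle$ and $\langle 0\rangle\times R_2$) and exhibit a plane drawing or verify the absence of $K_5$ and $K_{3,3}$ subdivisions; for necessity, produce a $K_5$ or $K_{3,3}$ subdivision in every configuration outside (i)--(iv), e.g.\ when $n\geq 4$, when $n=3$ with some $R_i$ not a field, or when $n=2$ with $R_1$ non-principal or with both $R_i$ having at least two nonzero proper ideals one of which is properly contained in the other. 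Note also that in this paper the statement is imported as \cite[Theorem 4.1]{mathil2022embedding} and used as a preliminary; it is not proved here, and your proposal does not supply the missing proof.
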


\begin{theorem}{\cite[Theorem 4.4]{mathil2022embedding}}\label{genus13product}
Let $R \cong R_1 \times R_2 \times \cdots \times R_n$ ($n \geq 3$) be a non-local commutative ring, where each $R_i$ is a local ring with maximal ideal $\mathcal{M}_i$. Then $g(\textnormal{PIS}(R)) = 1$ if and only if $R \cong R_1 \times F_2 \times F_3$ such that $\mathcal{I}^{*}(R_1) = \{\mathcal{M}_1\}$.
\end{theorem}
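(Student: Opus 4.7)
The plan is to prove the theorem by first reducing to a small number of ring-structure cases and then, within each case, pinning down the genus by combining explicit $\mathbb{S}_2$-embeddings with lower bounds coming from Proposition \ref{genus}. Throughout, I will separate the analysis by the number of factors $n$ in $R \cong R_1 \times \cdots \times R_n$, and within each $n$, by the structure of the $\mathcal{I}^{*}(R_i)$'s.

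\medskip

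\textbf{Necessity.} First I would handle $n \ge 3$. Theorem \ref{genus13product} already isolates the single genus-one family, and Theorem \ref{Planar_primeidealsum} isolates the planar one. Every remaining ring with $n \ge 3$ falls into a short list: $R \cong R_1 \times F_2 \times F_3$ with $|\mathcal{I}^{*}(R_1)| \ge 2$, or $R \cong R_1 \times R_2 \times F_3$ with $R_1, R_2$ non-fields, or $R \cong R_1 \times R_2 \times R_3$, or $n \ge 4$. For each of these I would exhibit a subgraph of $\mathrm{PIS}(R)$ forcing $g \ge 3$. The natural witnesses are cliques among ideals whose pairwise sums are the same prime ideal, and complete bipartite subgraphs between ideals of the form $I \times 0 \times \cdots$ and $0 \times J \times \cdots$ that all sum to a maximal ideal. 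Proposition \ref{genus}(i),(ii) gives $g(K_8) = 2$, $g(K_9) = 3$, $g(K_{5,4}) = 2$, $g(K_{5,5}) = 3$, $g(K_{4,6}) = 3$, which will be the quantitative inputs; in particular, exhibiting a $K_9$ or $K_{5,5}$ (or $K_{4,6}$) subgraph finishes each of these cases.

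Next I would handle $n = 2$ outside cases (i) and (ii). By Theorem \ref{Planar_primeidealsum} the planar $n=2$ rings are known; a further short list of $n=2$ rings has genus $1$ (these are the non-planar cases not covered by (i), (ii)), and these I would discard by citing or re-deriving the corresponding subgraphs (this uses the genus-one classification behind Theorem \ref{genus13product}). The residual cases are: $R_1 \times R_2$ with $R_1$ non-principal or with $\eta(\mathcal{M}_1) \ge 6$ or $\eta(\mathcal{M}_2) \ge 3$, or $R_1 \times F_2$ with $R_1$ non-principal and not of the form in (i). For each such ring I would list $\mathcal{I}^{*}(R)$ concretely, identify the ideals $I$ with $I \subset \mathcal{M}_1 \times R_2$ (all of which are pairwise adjacent because their pairwise sums lie in $\mathcal{M}_1 \times R_2$, a prime ideal), and produce either a $K_9$ or a suitable $K_{m,n}$ with $(m-2)(n-2) > 8$, forcing $g \ge 3$.

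\medskip

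\textbf{Sufficiency.} For cases (i) and (ii) I would first enumerate the nonzero proper ideals of $R$ explicitly. In (ii) with $\eta(\mathcal{M}_1) = k \in \{4,5\}$ and $\eta(\mathcal{M}_2) = 2$, the ideals of $R_1$ form a chain $\mathcal{M}_1 \supset \mathcal{M}_1^{2} \supset \cdots \supset \mathcal{M}_1^{k-1}$ and $R_2$ has the unique nonzero proper ideal $\mathcal{M}_2$, giving $3k+1$ vertices of $\mathrm{PIS}(R)$; adjacency is easy to determine since $I + J$ is prime iff the projections sum to a maximal ideal in each coordinate. In (i) I would list the ideals of $R_1$ with $\mathcal{M}_1 = \langle x, y\rangle$ and $x^2 = y^2 = 0$ (the ideals are $0$, $\langle x\rangle$, $\langle y\rangle$, $\langle x + \alpha y\rangle$ for certain $\alpha$, $\langle xy\rangle$ if nonzero, $\mathcal{M}_1$, and $R_1$), and form the product. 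Having the vertex/edge data, I would draw an embedding on the double torus $\mathbb{S}_{2}$ (a figure with a standard octagon or two handles) showing $g(\mathrm{PIS}(R)) \le 2$. The matching lower bound $g(\mathrm{PIS}(R)) \ge 2$ follows by locating a subgraph known to have genus $2$, for instance a $K_8$ among ideals pairwise summing to a fixed maximal ideal, or an appropriate $K_{4,5}$; Proposition \ref{genus} then gives $g \ge 2$. If the graph has cut-vertices I would also invoke Lemma \ref{genusofblocks} to compute the genus block-by-block.

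\medskip

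\textbf{Main obstacle.} The hard part is the explicit embedding in $\mathbb{S}_{2}$ for case (ii) with $\eta(\mathcal{M}_1) = 5$: the graph has $16$ vertices with a very dense core (the ideals of the form $I_1 \times R_2$ together with several $I_1 \times \mathcal{M}_2$'s are pairwise adjacent, producing a large clique on $\mathbb{S}_{2}$). Constructing a crossing-free drawing requires carefully routing edges through the two handles, and verifying planarity of the residual faces once the clique is laid out. A secondary obstacle is ensuring, in the necessity argument, that the clique/biclique lower bounds are tight enough to exclude borderline cases such as $\eta(\mathcal{M}_1) = 6, \eta(\mathcal{M}_2) = 2$; for these I expect to need a more refined subgraph than a single $K_{m,n}$, combining an induced clique with an attached biclique whose genera add via Lemma \ref{genusofblocks}.
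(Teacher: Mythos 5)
Your proposal does not address the statement it is meant to prove. The statement (Theorem \ref{genus13product}) is the classification of rings $R \cong R_1 \times \cdots \times R_n$ with $n \ge 3$ whose prime ideal sum graph has genus \emph{one}; what you have sketched is instead a proof of the paper's main result, Theorem \ref{genus2R_1R_2}, the genus-\emph{two} classification --- your ``cases (i) and (ii)'', the conditions $\eta(\mathcal{M}_1) \in \{4,5\}$ with $\eta(\mathcal{M}_2) = 2$, and $\mathcal{M}_1 = \langle x,y\rangle$ with $x^2 = y^2 = 0$ are precisely the cases of that theorem, and every embedding you propose is in $\mathbb{S}_2$ rather than the torus. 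Worse, your necessity argument opens by invoking Theorem \ref{genus13product} itself as a known input (``Theorem \ref{genus13product} already isolates the single genus-one family''), which is circular if this is the statement under proof. A proof of the actual statement would require (a) a toroidal embedding of $\textnormal{PIS}(R_1 \times F_2 \times F_3)$ when $\mathcal{I}^{*}(R_1) = \{\mathcal{M}_1\}$ together with a non-planarity witness such as a $K_5$ or $K_{3,3}$ subdivision, and (b) a proof that every other product of at least three local rings has genus at least $2$; neither appears in your sketch. (Note also that this theorem is quoted from \cite{mathil2022embedding}; the present paper does not prove it, so there is no in-paper proof to compare against --- but that does not excuse proving a different theorem.)

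A secondary point: one of your quantitative inputs is wrong. By Proposition \ref{genus}(ii), $g(K_{4,6}) = \left\lceil (4-2)(6-2)/4 \right\rceil = 2$, not $3$, so exhibiting a $K_{4,6}$ subgraph would not force genus at least $3$ in the cases where you propose to use it; you would need $K_{5,5}$ or $K_{4,7}$ (or a block decomposition via Lemma \ref{genusofblocks}) instead.
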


\begin{theorem}{\cite[Theorem 4.5]{mathil2022embedding}}\label{genusofR1R2}
  Let $R \cong R_1 \times R_2$, where $R_1$ and $R_2$ are local rings with maximal ideals $\mathcal{M}_1$ and $\mathcal{M}_2$, respectively. Then $g(\textnormal{PIS}(R)) = 1$ if and only if $\mathcal{I}^*(R_1) = \{I_1, \mathcal{M}_1\}$ and $\mathcal{I}^*(R_2) = \{ \mathcal{M}_2\}$.
\end{theorem}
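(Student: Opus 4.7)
The plan is to prove Theorem~\ref{genus2R_1R_2} by combining the planarity and toroidality classifications of Theorems~\ref{Planar_primeidealsum}, \ref{genus13product}, and \ref{genusofR1R2} with direct embedding and forbidden-subgraph arguments. Since those results enumerate all rings with $g(\textnormal{PIS}(R)) \le 1$, the task reduces to (a) verifying that each ring in (i)--(ii) has genus exactly $2$, and (b) showing every other non-local $R$ has $g(\textnormal{PIS}(R)) \ge 3$. I will repeatedly invoke the basic adjacency criterion in a product of local rings: writing $I = I_1 \times \cdots \times I_n$ and $J = J_1 \times \cdots \times J_n$, we have $I \sim J$ in $\textnormal{PIS}(R)$ iff $I + J$ is a maximal ideal of $R$, equivalently there is a unique coordinate $k$ with $I_k + J_k = \mathcal{M}_k$ while $I_\ell + J_\ell = R_\ell$ for all $\ell \ne k$.

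The case $n \ge 3$ is dispatched first. Theorems~\ref{Planar_primeidealsum} and \ref{genus13product} leave only $F_1 \times F_2 \times F_3$ and $R_1 \times F_2 \times F_3$ (with $\mathcal{I}^*(R_1) = \{\mathcal{M}_1\}$) as rings of genus at most $1$. Any other $n = 3$ ring is obtained by enlarging $R_1$ past a single non-trivial ideal or by promoting a field factor to a proper local ring, and an inspection of the extra ideals of the forms $R_1 \times R_2 \times I_3$, $\mathcal{M}_1 \times R_2 \times R_3$, and so on, locates a $K_{5,5}$ or $K_9$ subgraph, forcing $g \ge 3$ by Proposition~\ref{genus}. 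For $n \ge 4$, the injection $I \mapsto I \times R_n$ embeds $\textnormal{PIS}(R_1 \times \cdots \times R_{n-1})$ as a subgraph of $\textnormal{PIS}(R_1 \times \cdots \times R_n)$, and already at $n = 4$ with all factors fields one collects enough mutually adjacent co-maximal ideals to supply the required subgraph.

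For the $n = 2$ sufficiency direction, I construct explicit double-torus embeddings of $\textnormal{PIS}(R)$ for case (i) and for both sub-cases $\eta(\mathcal{M}_1) \in \{4,5\}$ of case (ii). The recipe is uniform: list all nonzero proper ideals (on the order of $12$--$16$ vertices), compute all adjacencies via the criterion above, and lay the graph out on $\mathbb{S}_2$ by assigning one handle to edges joining ``left'' vertices $I \times 0$ and $I \times \mathcal{M}_2$ and the other to edges joining ``right'' vertices $0 \times J$ and $\mathcal{M}_1 \times J$, routing the remaining cross edges carefully. The matching lower bound $g \ge 2$ in each case is immediate, since Theorem~\ref{Planar_primeidealsum} rules out $g = 0$ and Theorem~\ref{genusofR1R2} rules out $g = 1$ for each ring by a direct ideal-count check.

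The main obstacle is the necessity direction for $n = 2$: showing every $R = R_1 \times R_2$ outside the list has $g \ge 3$. Using the symmetry $R_1 \times R_2 \cong R_2 \times R_1$, I stratify into (a) $R_2$ a field with $R_1$ non-principal, (b) both $R_1, R_2$ principal local (neither a field), and (c) neither a field with at least one non-principal. In (a), any $R_1$ beyond the two-generator nilpotent model of case (i)---obtained by a third generator of $\mathcal{M}_1$, by relaxing $x^2 = y^2 = 0$, or by otherwise enlarging the ideal lattice---produces a $K_{4,5}$ or $K_{5,5}$. In (b), enumerating the ideals as $\mathcal{M}_1^i \times \mathcal{M}_2^j$ reduces the task to the numerical pair $(\eta(\mathcal{M}_1), \eta(\mathcal{M}_2))$, and the excluded pairs---up to symmetry, $\eta(\mathcal{M}_1) \ge 6$ with $\eta(\mathcal{M}_2) = 2$, $\eta(\mathcal{M}_1) \ge 4$ with $\eta(\mathcal{M}_2) \ge 3$, and $\eta(\mathcal{M}_1) = \eta(\mathcal{M}_2) = 3$---each admit an explicit $K_{5,5}$ or $K_{4,7}$ subgraph. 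Case (c) then follows by combining the arguments of (a) and (b), since adding non-principal structure only produces more ideals and hence more adjacencies. The subtlest sub-case, where the bookkeeping concentrates, is the boundary $\eta(\mathcal{M}_1) = 6, \eta(\mathcal{M}_2) = 2$ (and its symmetric version), whose $19$-vertex graph sits just past the threshold and requires pinpointing the genus-$3$ subgraph with some care.
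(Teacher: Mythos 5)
The statement you were asked to prove is Theorem~\ref{genusofR1R2}: the classification of products $R_1\times R_2$ of two local rings for which $g(\textnormal{PIS}(R))=1$, namely $\mathcal{I}^*(R_1)=\{I_1,\mathcal{M}_1\}$ and $\mathcal{I}^*(R_2)=\{\mathcal{M}_2\}$. (In this paper that result is not proved at all; it is imported from \cite[Theorem 4.5]{mathil2022embedding} as a preliminary.) Your proposal does not address this statement: it is an outline of a proof of Theorem~\ref{genus2R_1R_2}, the genus-two main theorem. Worse, your argument explicitly invokes Theorem~\ref{genusofR1R2} as an ingredient (``Theorem~\ref{genusofR1R2} rules out $g=1$ for each ring by a direct ideal-count check''), so even if one tried to read your text as an indirect argument for the genus-one classification, it would be circular.

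To actually prove the stated theorem you would need two things, neither of which appears in your proposal: (a) for the ring with $\mathcal{I}^*(R_1)=\{I_1,\mathcal{M}_1\}$ and $\mathcal{I}^*(R_2)=\{\mathcal{M}_2\}$, an explicit toroidal embedding of $\textnormal{PIS}(R)$ together with a non-planarity certificate (a $K_5$ or $K_{3,3}$ subdivision), which pins the genus at exactly $1$; and (b) a proof that every strictly larger ideal configuration in $R_1\times R_2$ --- e.g.\ $|\mathcal{I}^*(R_1)|\ge 3$ with $R_2$ not a field, or both factors having at least two nonzero proper ideals --- forces $g\ge 2$, via a subgraph of genus at least two (such as $K_{4,4}$, $K_7$, or a block decomposition as in Lemma~\ref{genusofblocks}) or a face-tracing obstruction. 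None of the $K_{5,5}$/$K_{4,7}$ constructions you describe are relevant to that threshold; they are calibrated to exclude genus $2$, not genus $1$.
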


\section{Proof of the Main Result}

In this section, we prove the main result of the paper. Let $i, j, k \in \{1, 2, \ldots, n\}$. For $F_1 \times F_2 \times \cdots \times F_n$ by $F_{ijk\ldots}$, we mean $\underbrace{\langle 0 \rangle \times \cdots \times \langle 0 \rangle}_{i-1 \ \text{times}} \times F_i \times \underbrace{\langle 0 \rangle \times \cdots \times \langle 0 \rangle}_{j-i-1 \ \text{times}} \times F_j \times \underbrace{\langle 0 \rangle \times \cdots \times\langle 0 \rangle}_{k-j-1 \ \text{times}} \times F_k \times \underbrace{\langle 0 \rangle \times \cdots \times \langle 0 \rangle}_{n-k \ \text{times}}$. The following proposition is essential to prove the Theorem \ref{genus2R_1R_2}.

\begin{proposition}\label{genus2ofnproduct}
    Let $R \cong R_1 \times R_2 \times \cdots \times R_n$($n \geq3$) be a non-local commutative ring. Then $g(\textnormal{PIS}(R)) \neq 2$.
    
\end{proposition}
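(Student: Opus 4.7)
The plan is as follows. By Theorems~\ref{Planar_primeidealsum} and~\ref{genus13product}, for $n \ge 3$ the only rings with $g(\textnormal{PIS}(R)) \le 1$ are $R \cong F_1 \times F_2 \times F_3$ (planar) and $R \cong R_1 \times F_2 \times F_3$ with $\mathcal{I}^*(R_1) = \{\mathcal{M}_1\}$ (toroidal). It therefore suffices to prove $g(\textnormal{PIS}(R)) \ge 3$ for every other ring $R$ with $n \ge 3$. I would handle a short list of minimal ``boundary'' rings explicitly and then reduce every other case to one of them by exhibiting the base graph as an induced subgraph of $\textnormal{PIS}(R)$ and applying monotonicity of genus. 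The expected boundary cases are (a)~$R \cong F_1 \times F_2 \times F_3 \times F_4$, (b)~$R \cong R_1 \times F_2 \times F_3$ with $|\mathcal{I}^*(R_1)|=2$, (c)~$R \cong R_1 \times R_2 \times F_3$ with $\mathcal{I}^*(R_i)=\{\mathcal{M}_i\}$ for $i=1,2$, and (d)~$R \cong R_1 \times R_2 \times R_3$ with $\mathcal{I}^*(R_i)=\{\mathcal{M}_i\}$ for each~$i$.

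For case (a) I would enumerate the $14$ proper nonzero ideals $F_S$ indexed by nonempty proper subsets $S \subsetneq \{1,2,3,4\}$, observe that $F_S \sim F_T$ iff $|S \cup T|=3$, and count $|E|=48$. The Euler-formula inequality $|E| \le 3|V|+6g-6$ forces $g \ge 2$ with equality $48 = 3\cdot 14 + 6\cdot 2 - 6$ at $g=2$; equality in turn forces every embedding in $\mathbb{S}_2$ to be a triangulation, and hence every edge to lie in at least two triangles of the graph. However, the edge joining $F_1$ and $F_{123}$ has $F_{23}$ as its unique common neighbour, so it lies in only one triangle, contradicting the triangulation requirement and yielding $g(\textnormal{PIS}(R)) \ge 3$. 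For cases (b)--(d) I would perform analogous enumerations; when the Euler bound is no longer tight the obstruction would come instead from exhibiting a complete bipartite subgraph such as $K_{5,5}$ or $K_{4,7}$ (each of genus $\ge 3$ by Proposition~\ref{genus}) or from a block decomposition via Lemma~\ref{genusofblocks}. For a general ring $R \cong R_1 \times \cdots \times R_n$ ($n \ge 3$) not already covered by Theorems~\ref{Planar_primeidealsum} and~\ref{genus13product}, I would locate an induced subgraph of $\textnormal{PIS}(R)$ isomorphic to $\textnormal{PIS}$ of one of the boundary rings by restricting to ideals of the form $I_1 \times \cdots \times I_n$ with each $I_j$ drawn from $\{0, R_j\}$ when $R_j$ is a field and from $\{0, \mathcal{M}_j, R_j\}$ otherwise, and checking that the sum and primeness relations restrict correctly.

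The main obstacle will be cases (b)--(d): unlike (a), where $|E|$ attains the Euler bound at $g=2$ and a single ``thin'' edge supplies the contradiction, in these cases $|E|$ sits strictly below the bound (for instance, case~(b) yields $|V|=14$ but only $|E|=39$), so the triangulation trick is unavailable and one must locate an explicit high-genus subgraph case by case; the reduction step additionally requires verifying in each remaining family of rings that the chosen ideal subcollection indeed reproduces one of the base $\textnormal{PIS}$-graphs as an induced subgraph.
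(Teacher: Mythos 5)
Your overall skeleton matches the paper's: reduce via Theorems~\ref{Planar_primeidealsum} and~\ref{genus13product} to showing $g \ge 3$ for everything else, treat a short list of minimal rings, and propagate by induced subgraphs. Your treatment of case (a) is correct and is actually \emph{more} rigorous than the paper's: the edge count $|E|=48$ on $|V|=14$ vertices meets the bound $3|V|+6g-6$ exactly at $g=2$, forcing any genus-$2$ embedding to be a triangulation, and the edge $F_1F_{123}$ (unique common neighbour $F_{23}$) kills that possibility. The paper instead argues informally by inserting vertices $F_{24},F_{34},F_{14},F_2$ and then $F_{13},F_{123}$ into faces of an embedded $K_{3,3}$-subdivision; your Euler/triangulation argument is cleaner. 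Your reduction step also needs one correction: for $n\ge 5$ you must fix the ``inactive'' components to $R_j$ rather than letting them be $\langle 0\rangle$ (an ideal that is zero in two or more components is adjacent to nothing whose sum with it is prime), but this is easily repaired, and the same reduction (using a chain $0\subsetneq I\subsetneq \mathcal{M}_1$) does correctly fold every $R_1\times F_2\times F_3$ with $|\mathcal{I}^*(R_1)|\ge 2$ into your case (b).

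The genuine gap is case (b), which is exactly where the paper has to work hardest. For $R_1\times F_2\times F_3$ with $\mathcal{I}^*(R_1)=\{\mathcal{M}_1,\mathcal{M}_1^2\}$ your Euler bound ($|E|=39<48$) gives nothing beyond $g\ge 2$, and your fallback --- ``find a $K_{5,5}$ or $K_{4,7}$ subdivision'' --- is not substantiated and is doubtful here: the paper's own analysis only extracts two $K_{3,3}$-blocks sharing the cut vertex $\mathcal{M}_1\times F_2\times F_3$, which by Lemma~\ref{genusofblocks} yields genus exactly $2$ for that subgraph, so no single complete bipartite obstruction of genus $3$ is in evidence. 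To finish, the paper must argue that the remaining vertices $\mathcal{M}_1^2\times F_2\times\langle 0\rangle$ and $\langle 0\rangle\times F_2\times\langle 0\rangle$, both adjacent to the same three vertices of the embedded block structure, cannot be added to any genus-$2$ embedding --- an embedding-extension argument, not a subgraph argument. Your proposal names the right toolbox (blocks plus Lemma~\ref{genusofblocks}) but does not supply this step, and without it the hardest case of the proposition remains open. Cases (c) and (d), by contrast, are fine: the paper exhibits an explicit $K_{5,5}$ subdivision in $\textnormal{PIS}(R_1\times R_2\times F_3)$ using only ideals built from $\{\langle 0\rangle,\mathcal{M}_i,R_i\}$, so your hoped-for bipartite obstruction does exist there and Proposition~\ref{genus} closes those cases.
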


\begin{proof}
  Let $R \cong R_1 \times R_2 \times \cdots \times R_n$ be a non-local commutative ring, where each $R_i$ is a local ring with maximal ideal $\mathcal{M}_i$. First suppose that $n \ge 5$. To prove the result it is sufficient to show that $g(\textnormal{PIS}(F_1 \times F_2 \times F_3 \times F_4 \times F_5)) \neq 2$. By Figure \ref{subdivisionK_55fig1}, $\textnormal{PIS}(F_1 \times F_2 \times F_3 \times F_4 \times F_5)$ contains a subgraph homeomorphic to $K_{5,5}$. 
        \begin{figure}[h!]
\centering
\includegraphics[width=0.5 \textwidth]{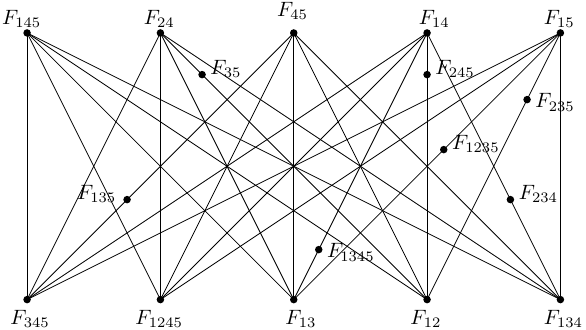}
\caption{Subgraph of $\text{PIS}(F_1 \times F_2 \times F_3 \times F_4 \times F_5)$}
\label{subdivisionK_55fig1}
\end{figure}
    Thus, by Proposition \ref{genus}, the results holds for $n \ge 5$. Now we assume that $R \cong R_1 \times R_2 \times R_3 \times R_4$. We show that $g(\textnormal{PIS}(F_1 \times F_2 \times F_3 \times F_4 )) \ge 3$.
Consider $R' = F_1 \times F_2 \times F_3 \times F_4$. Let $H$ be the subgraph of $\textnormal{PIS}(R)$ shown in Figure \ref{K_33subdivisionF1234}. Note that $H$ is homeomorphic to $K_{3,3}$.
            \begin{figure}[h!]
\centering
\includegraphics[width=0.4 \textwidth]{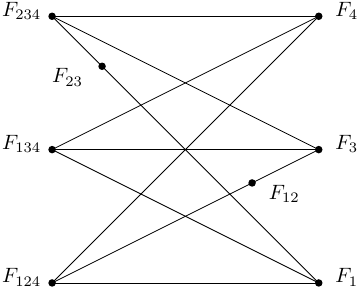}
\caption{Subgraph of $\text{PIS}(F_1 \times F_2 \times F_3 \times F_4)$}
\label{K_33subdivisionF1234}
\end{figure}
    Consequently, $g(H) = 1$ and so $g(\textnormal{PIS}(R')) \ge 1$. Suppose that $g(\textnormal{PIS}(R')) = 1$. Now to embed $\textnormal{PIS}(R')$ in $\mathbb{S}_1$ using $H$, first we insert the vertices $F_{24}$, $F_{34}$, $F_{14}$, $F_2$ and their incident edges in an embedding of $H$ in $\mathbb{S}_1$. Note that $F_{24} \sim F_{34} \sim F_{14} \sim F_2$ in $\textnormal{PIS}(R')$. All these vertices must be inserted in the same face $F$. Note that both the vertices $[F_{24}, F_{34}]$ and $[F_{14}, F_2]$ are adjacent to the vertices $F_{124}$, $F_{134}$ and $F_{234}$. It implies that the insertion of the vertices $F_{24}$, $F_{34}$, $F_{14}$ and $F_2$ in $F$ is not possible without edge crossings. Let $H'$ be the graph induced by the vertex set $V(H) \cup \{ F_{24}, F_{34}, F_{14}, F_2 \} $. It implies that $g(H') \ge 2$ and so $g(\textnormal{PIS}(R')) \ge 2$. Suppose that $g(H') = g(\textnormal{PIS}(R')) = 2$. To embed $\textnormal{PIS}(R')$ in $\mathbb{S}_2$, first we insert the vertices $F_{13}$, $F_{123}$ and their incident edges in an embedding of $H'$ in $\mathbb{S}_2$. Since $F_{13} \sim F_{123}$ in $\textnormal{PIS}(R')$, they must be inserted in the same face $F'$. Note that both the vertices $F_{13}$ and $F_{123}$ are adjacent with $F_{12}$, $F_{23}$ and $F_2$. It follows that the insertion of $F_{13}$ and $F_{123}$ in $F'$ is not possible without edge crossings. Therefore, $g(\textnormal{PIS}(R')) \ge 3$ and so $g(\textnormal{PIS}(R_1 \times R_2 \times R_3 \times R_4)) \ge 3$.
    
    We may now assume that $n =3$ i.e., $R \cong R_1 \times R_2 \times R_3$. Suppose that one of $R_i$ is a field. Without loss of generality assume that $R_3$ is a field so that $R \cong R_1 \times R_2 \times F_3$. By Figure \ref{K_55subdivisionR_1R_2F_3fig1}, $\textnormal{PIS}(R)$ contains a subgraph homeomorphic to $K_{5,5}$. It follows that $g(\textnormal{PIS}(R_1 \times R_2 \times F_3)) \ge 3$.
\begin{figure}[h!]
\centering
\includegraphics[width=0.75 \textwidth]{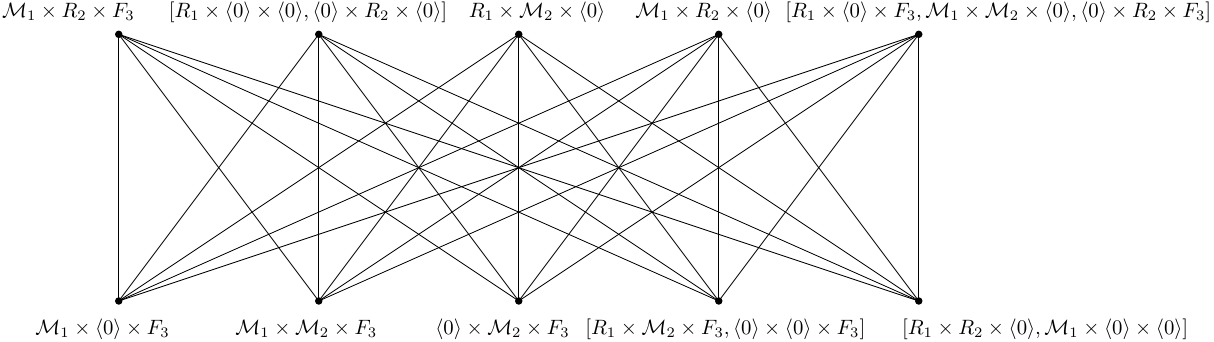}
\caption{Subgraph of $\text{PIS}(R_1 \times R_2 \times F_3)$}
\label{K_55subdivisionR_1R_2F_3fig1}
\end{figure}
    Now let, $R \cong R_1 \times F_2 \times F_3$. Further, first suppose that $R_1$ is not a principal ring and consider $\mathcal{M}_1 = \langle x,y \rangle$. Consider the ideals $I_1 = \langle x \rangle$, $ I_2 = \langle y \rangle$ and $I_3 = \langle x + y \rangle$ of $R_1$. Then by Figure \ref{K_55SubdivisionR_1F_2F_3fig2}, $\textnormal{PIS}(R)$ contains a subgraph homeomorphic to $K_{5,5}$. Therefore, $g(\textnormal{PIS}(R_1 \times F_2 \times F_3)) \ge 3$.
    \begin{figure}[h!]
\centering
\includegraphics[width=0.7 \textwidth]{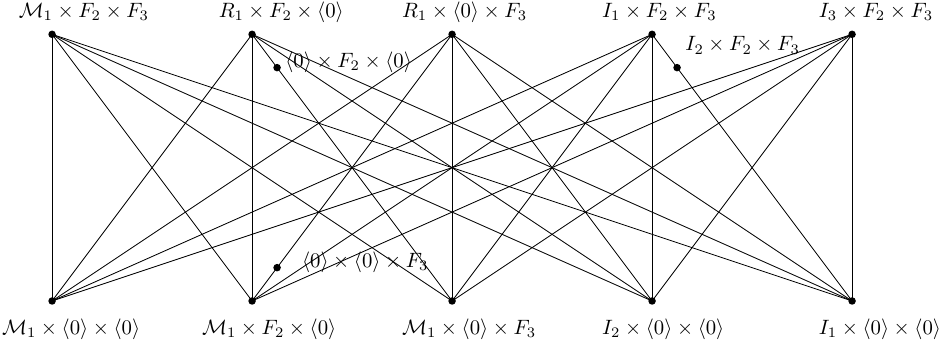}
\caption{Subgraph of $\text{PIS}(R_1 \times F_2 \times F_3)$, where $\mathcal{M}_1 = \langle x, y \rangle$}
\label{K_55SubdivisionR_1F_2F_3fig2}
\end{figure}
    Now suppose that $R_1$ is a principal ring. It implies that $\mathcal{M}_1$ is a principal ideal. 
    If $\eta(\mathcal{M}_1) \le 2$, then by Theorem \ref{Planar_primeidealsum} and Theorem \ref{genus13product}, we have $g(\textnormal{PIS}(R)) \le 1$. Now let $R \cong R_1 \times F_2 \times F_3$ such that $\eta(\mathcal{M}_1) = 3$. Then  $\mathcal{I}^{*}(R_1) = \{\mathcal{M}_1, \mathcal{M}_1^2 \}$. Note that $\textnormal{PIS}(R)$ contains a subgraph $G_2$ homeomorphic to two blocks of $K_{3,3}$ with commom vertex $\mathcal{M}_1 \times F_2 \times F_3$ (see Figure \ref{blockK_33primeidealsumfig4}).
            \begin{figure}[h!]
\centering
\includegraphics[width=0.65 \textwidth]{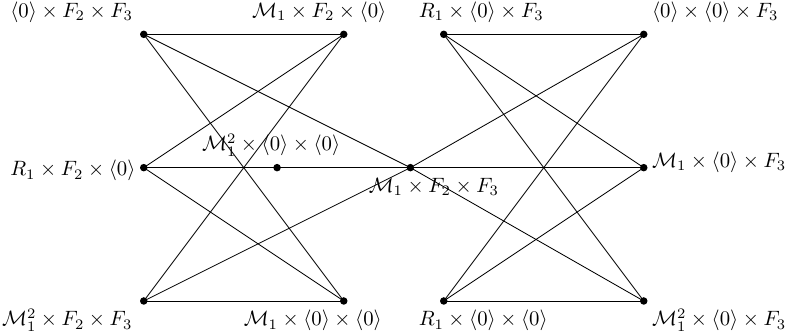}
\caption{Subgraph $G_2$ of $\text{PIS}(R_1 \times F_2 \times F_3)$}
\label{blockK_33primeidealsumfig4}
\end{figure}
    By Lemma \ref{genusofblocks}, it implies that $g(G_2) =2$ and so $g(\textnormal{PIS}(R_1 \times F_2 \times F_3)) \ge 2$. Suppose that $g(\textnormal{PIS}(R_1 \times F_2 \times F_3)) = 2$. Now to embed $\textnormal{PIS}(R_1 \times F_2 \times F_3)$ from $G_2$, first we insert the vertices $\mathcal{M}_1^2 \times F_2 \times \langle 0 \rangle$ and $\langle 0 \rangle \times F_2 \times \langle 0 \rangle$ in an embedding of $G_2$ in $\mathbb{S}_2$. Since both the vertices $u= \mathcal{M}_1^2 \times F_2 \times \langle 0 \rangle$ and $v = \langle 0 \rangle \times F_2 \times \langle 0 \rangle$ are adjacent to the vertices $R_1 \times \langle 0 \rangle \times \langle 0 \rangle $, $R_1 \times F_2 \times \langle 0 \rangle $ and $\mathcal{M}_1 \times F_2 \times F_3$, therefore $u$ and $v$ must be inserted in two different faces $F$ and $F'$ having $R_1 \times \langle 0 \rangle \times \langle 0 \rangle $, $R_1 \times F_2 \times \langle 0 \rangle $ and $\mathcal{M}_1 \times F_2 \times F_3$ as common vertices. This is not possible without edge crossings because none of the vertices $R_1 \times \langle 0 \rangle \times \langle 0 \rangle $, $R_1 \times F_2 \times \langle 0 \rangle $ and $\mathcal{M}_1 \times F_2 \times F_3$ are of degree two in $G_2$.
    Consequently, $g(\textnormal{PIS}(R_1 \times F_2 \times F_3)) \ge 3$. This completes our proof.
 \end{proof}

\noindent\textbf{Proof of Theorem \ref{genus2R_1R_2}:}
   First suppose that $g(\textnormal{PIS}(R)) = 2$. By Proposition \ref{genus2ofnproduct}, we have $R \cong R_1 \times R_2$. Assume that $R_1$ is not a principal ring and consider $\mathcal{M}_1 = \langle x,y \rangle$. Further, suppose that $R_2$ has exactly one nonzero proper ideal $\mathcal{M}_2$. Consider the ideals $I_1 = \langle x \rangle$, $ I_2 = \langle y \rangle$ and $I_3 = \langle x + y \rangle$ of $R_1$. Then $\textnormal{PIS}(R)$ contains a subgraph $G'$ homeomorphic to $K_{5,4}$ (see Figure \ref{K_54SubdivisionR_1R_2fig5}).
            \begin{figure}[h!]
\centering
\includegraphics[width=0.65 \textwidth]{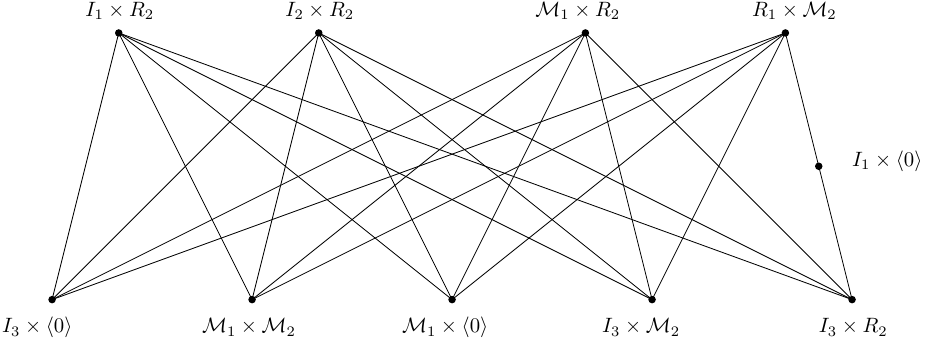}
\caption{Subgraph $G'$ of $\text{PIS}(R_1 \times R_2 )$}
\label{K_54SubdivisionR_1R_2fig5}
\end{figure}
    It implies that $g(G') = 2$ and so $g(\textnormal{PIS}(R)) \ge 2$. Suppose that $g(\textnormal{PIS}(R)) = 2$. Now to embed $\textnormal{PIS}(R)$ from $G'$, first we insert the vertices $I_1 \times \mathcal{M}_2$, $R_1 \times \langle 0 \rangle$ and $I_2 \times \mathcal{M}_2$ in an embedding of $G'$ in $\mathbb{S}_2$. Since $I_1 \times \mathcal{M}_2 \sim R_1 \times \langle 0 \rangle \sim I_2 \times \mathcal{M}_2$, therefore all these vertices must be inserted in the same face $F$. Note that, both the vertices $u' = I_1 \times \mathcal{M}_2$ and $ v' = I_2 \times \mathcal{M}_2$ are adjacent with $I_3 \times R_2$, $\mathcal{M}_1 \times R_2$ and $R_1 \times \mathcal{M}_2$. It implies that the vertices $u'$ and $v'$ cannot be inserted in $F$ without edge crossings. Consequently, $R_2$ is a field and so $R \cong R_1 \times F_2$. Now assume that $\mathcal{M}_1 = \langle x,y,z \rangle$. Consider the ideals $I_1 = \langle x \rangle$, $ I_2 = \langle y \rangle$, $I_3 = \langle z \rangle$, $I_4 = \langle x, y \rangle$, $I_5 = \langle y,z \rangle$, $I_6 = \langle x,z \rangle$ and $I_7 = \langle y,  x + z \rangle$ of $R_1$. Then $\textnormal{PIS}(R)$ contains a subgraph homeomorphic to $K_{5,5}$ (see Figure \ref{K_55SubdivisionR_1R_2fig6}), a contradiction.
\begin{figure}[h!]
\centering
\includegraphics[width=0.65 \textwidth]{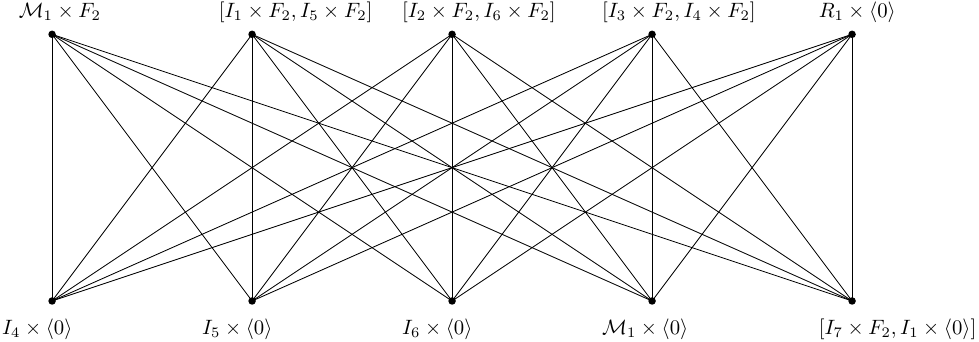}
\caption{Subgraph of $\text{PIS}(R_1 \times F_2 )$, where $\mathcal{M}_1 = \langle x, y, z \rangle$}
\label{K_55SubdivisionR_1R_2fig6}
\end{figure}
    It follows that $\mathcal{M}_1 = \langle x,y \rangle$. Suppose that $x^2 \neq 0$. Consider the ideals $I_1 = \langle x \rangle$, $ I_2 = \langle y \rangle$,  $I_3 = \langle x + y \rangle$, $I_4 = \langle x^2, x + y \rangle$ and $I_5 = \langle x^2, y \rangle$ of $R_1$. Then $\textnormal{PIS}(R)$ contains a subgraph homeomorphic to $K_{5,5}$ (see Figure \ref{K_55SubdivisionR_1F_2fig7}), a contradiction. It implies that $x^2 = y^2 =0$.
\begin{figure}[h!]
\centering
\includegraphics[width=0.65 \textwidth]{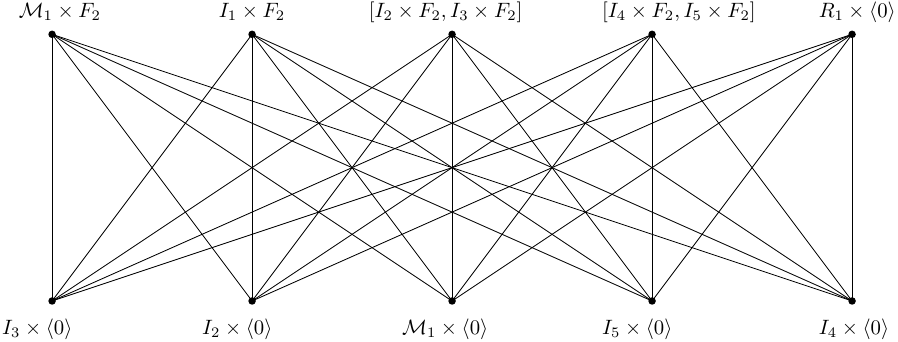}
\caption{Subgraph of $\text{PIS}(R_1 \times F_2 )$, where $\mathcal{M}_1 = \langle x, y \rangle$ }
\label{K_55SubdivisionR_1F_2fig7}
\end{figure}

    Next assume that both $R_1$ and $R_2$ are principal rings. If one of $R_i$ is a field, then by Theorem \ref{Planar_primeidealsum}, we obtain $g(\textnormal{PIS}(R)) = 0$, a contradiction. Let $\eta(\mathcal{M}_2)= 2$. If $\eta(\mathcal{M}_1) \le 3$, then by Theorem \ref{Planar_primeidealsum} and Theorem \ref{genusofR1R2}, we obtain $g(\textnormal{PIS}(R)) \le 1$, again a contradiction. Assume that $\eta(\mathcal{M}_1) \ge 6$. Then by Figure \ref{K_33subdivisionfig9}, note that $\textnormal{PIS}(R)$ contains a subgraph $H$ homeomorphic to two blocks of $K_{3,3}$ with common vertex $\mathcal{M}_1 \times R_2$.
    \begin{figure}[h!]
\centering
\includegraphics[width=0.65 \textwidth]{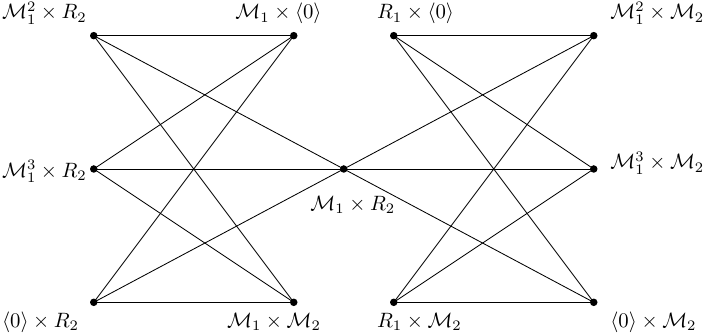}
\caption{Subgraph $H$ of $\text{PIS}(R_1 \times R_2 )$ }
\label{K_33subdivisionfig9}
\end{figure}
It follows that $g(H) = 2$ and so $g(\textnormal{PIS}(R)) \ge 2$. Suppose that $g(\textnormal{PIS}(R)) = 2$. Now to embed $\textnormal{PIS}(R)$ in $\mathbb{S}_2$ from $H$, first we insert the vertices $\mathcal{M}_1^4 \times R_2$, $\mathcal{M}_1^5 \times R_2$ and their incident edges in an embedding of $H$ in $\mathbb{S}_2$. Note that both the vertices $ u = \mathcal{M}_1^4 \times R_2$ and $ v = \mathcal{M}_1^5 \times R_2$ are adjacent with the vertices $\mathcal{M}_1 \times \langle 0 \rangle$, $\mathcal{M}_1 \times \mathcal{M}_2$ and $\mathcal{M}_1 \times R_2$. It implies that $u$ and $v$ must be inserted into two different faces having common vertices as $\mathcal{M}_1 \times \langle 0 \rangle$, $\mathcal{M}_1 \times \mathcal{M}_2$ and $\mathcal{M}_1 \times R_2$, which is not possible without edge crossings because none of the vertices $\mathcal{M}_1 \times \langle 0 \rangle$, $\mathcal{M}_1 \times \mathcal{M}_2$ and $\mathcal{M}_1 \times R_2$ are of degree two in $H$. Therefore,  $g(\textnormal{PIS}(R)) \ge 3$, a contradiction. Consequently, $\eta(\mathcal{M}_1) \in \{ 4,5\}$.

Next let $\eta(\mathcal{M}_2)= 3$. To prove our result it is sufficient to show that $\eta(\mathcal{M}_1) \neq 3$. On contrary assume that $\eta(\mathcal{M}_1)= 3$. Then by Figure \ref{K_33subdivisionfig8}, note that $\textnormal{PIS}(R)$ contains a subgraph $G$, which contains a subgraph homeomorphic to $K_{3,3}$.
   \begin{figure}[h!]
\centering
\includegraphics[width=0.65 \textwidth]{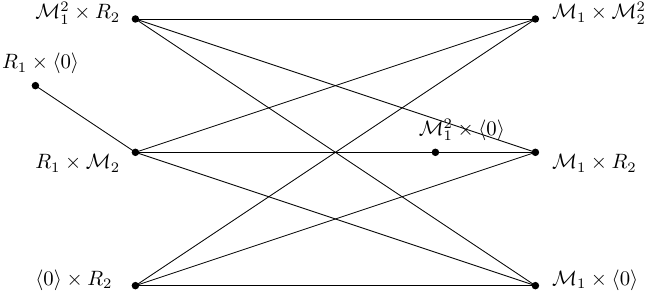}
\caption{Subgraph $G$ of $\text{PIS}(R_1 \times R_2 )$ }
\label{K_33subdivisionfig8}
\end{figure}
It follows that $g(G) = 1$ and so $g(\textnormal{PIS}(R)) \ge 1$. Suppose that $g(\textnormal{PIS}(R)) = 1$. Now to embed $\textnormal{PIS}(R)$ in $\mathbb{S}_1$ from $G$, first we insert the vertices $\langle 0 \rangle  \times \mathcal{M}_2$, $\mathcal{M}_1^2 \times \mathcal{M}_2$, $\mathcal{M}_1  \times \mathcal{M}_2$ and their incident edges in an embedding of $G$ in $\mathbb{S}_1$. Note that the vertices $\langle 0 \rangle  \times \mathcal{M}_2$, $\mathcal{M}_1^2 \times \mathcal{M}_2$ and $\mathcal{M}_1  \times \mathcal{M}_2$ are adjacent with the vertices $R_1 \times \langle 0 \rangle$, $R_1 \times \mathcal{M}_2$ and $\mathcal{M}_1 \times R_2$. Also $R_1 \times \langle 0 \rangle$ is of degree one in $G$, so it must lie inside some face $F$ of an embedding of $G$ in $\mathbb{S}_1$. It implies that the vertices $\langle 0 \rangle  \times \mathcal{M}_2$, $\mathcal{M}_1^2 \times \mathcal{M}_2$ and $\mathcal{M}_1  \times \mathcal{M}_2$ must be inserted in $F$, which is not possible without edge crossings. Now let $G'$ be the graph induced by the vertex set $V(G) \cup \{ \langle 0 \rangle  \times \mathcal{M}_2, \mathcal{M}_1^2 \times \mathcal{M}_2, \mathcal{M}_1  \times \mathcal{M}_2 \} $. It implies that $g(G') \ge 2$ and so $g(\textnormal{PIS}(R)) \ge 2$. Suppose $g(\textnormal{PIS}(R)) = 2$. To embed $\textnormal{PIS}(R)$ in $\mathbb{S}_2$ from $G'$, first we insert the vertices $R_1 \times \langle 0 \rangle $, $R_1 \times \mathcal{M}_2^2$ and their incident edges in an embedding of $G'$ in $\mathbb{S}_2$. Note that both the vertices $u_1 = R_1 \times \langle 0 \rangle $ and $v_1 = R_1 \times \mathcal{M}_2^2$ are adjacent with the vertices $\langle 0 \rangle  \times \mathcal{M}_2$, $\mathcal{M}_1^2 \times \mathcal{M}_2$ and $\mathcal{M}_1  \times \mathcal{M}_2$. It implies that $u_1 $ and $v_1$ must be inserted in the different faces containing the vertices $\langle 0 \rangle  \times \mathcal{M}_2$, $\mathcal{M}_1^2 \times \mathcal{M}_2$ and $\mathcal{M}_1  \times \mathcal{M}_2$, which is not possible without edge crossings because none of the vertices $\langle 0 \rangle  \times \mathcal{M}_2$, $\mathcal{M}_1^2 \times \mathcal{M}_2$ and $\mathcal{M}_1  \times \mathcal{M}_2$ are of degree two in $G'$. Therefore, $g(\textnormal{PIS}(R)) \geq 3$, a contradiction.

    Converse follows from the Figures \ref{genus2fig1}, \ref{genus2fig2} and \ref{genus2fig3}.
            \begin{figure}[h!]
\centering
\includegraphics[width=0.63 \textwidth]{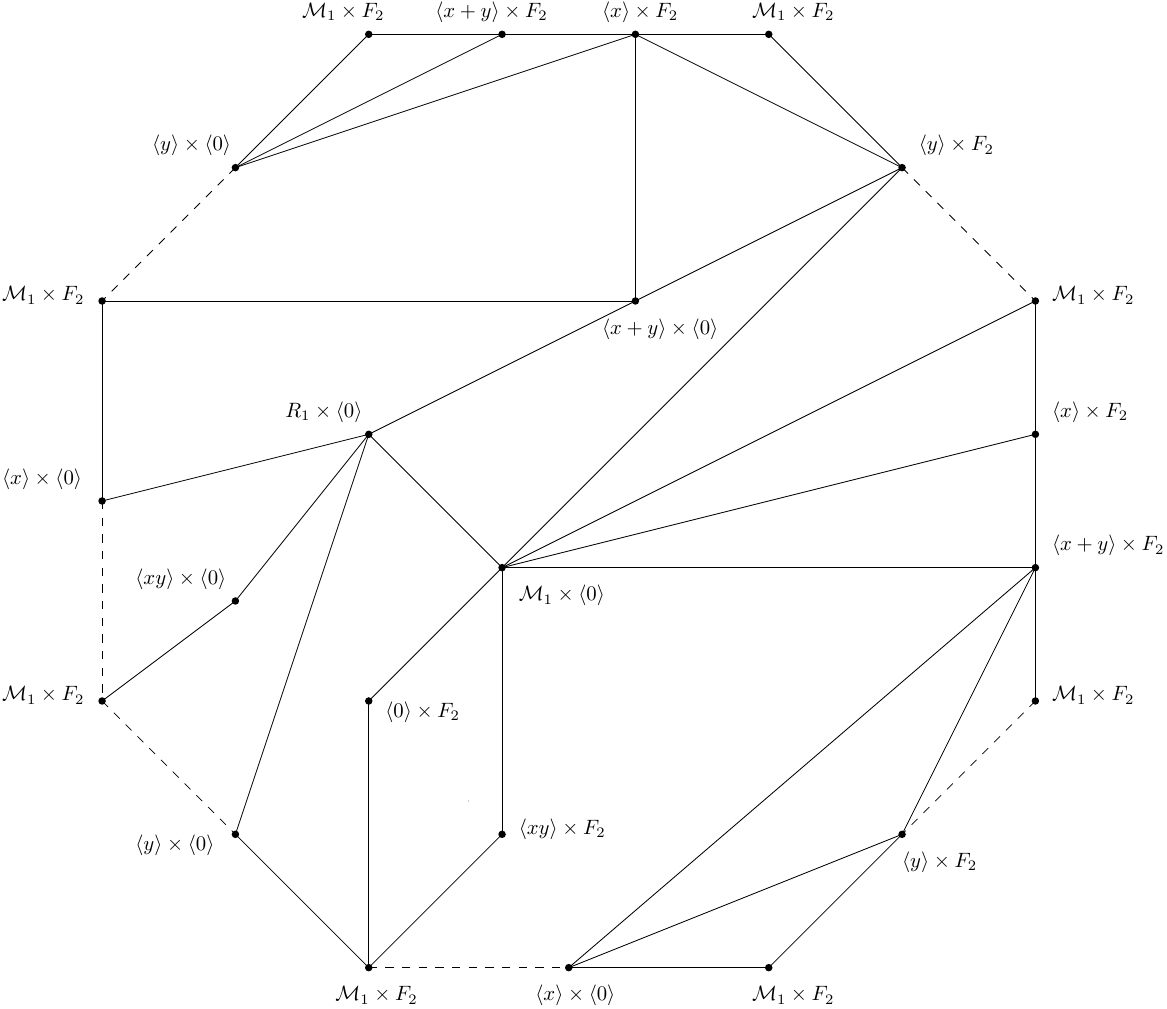}
\caption{Embedding of $\textnormal{PIS}(R_1 \times F_2)$ in $\mathbb{S}_2$, where $\mathcal{M}_1=  \langle x,y \rangle$ such that $x^2 = y^2 =0$.}
\label{genus2fig1}
\end{figure}

\begin{figure}[h!]
\centering
\includegraphics[width=0.63 \textwidth]{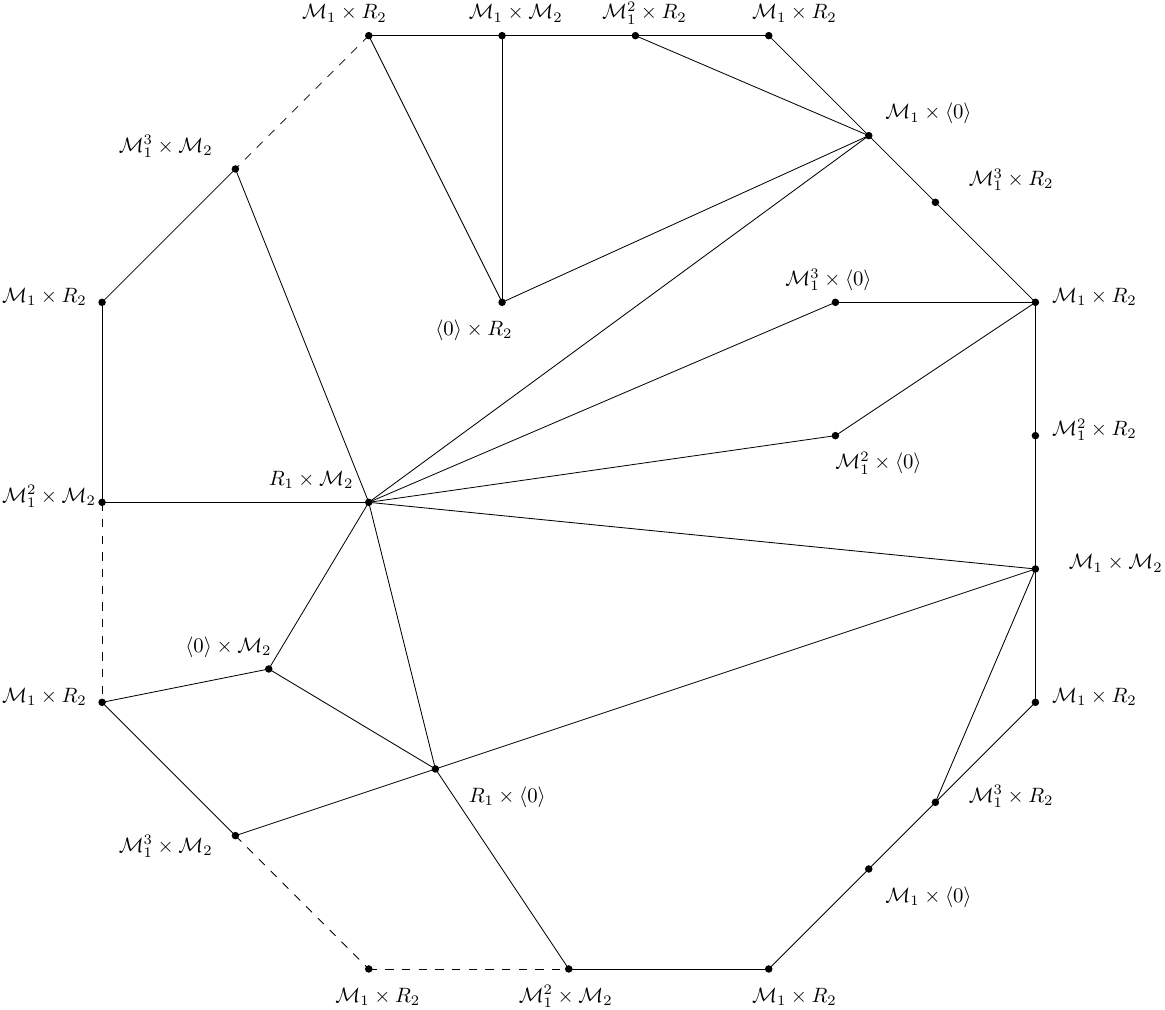}
\caption{Embedding of $\textnormal{PIS}(R_1 \times R_2)$ in $\mathbb{S}_2$, where $\eta(\mathcal{M}_1)= 4$ and $\eta(\mathcal{M}_2)= 2$.}
\label{genus2fig2}
\end{figure}

\begin{figure}[h!]
\centering
\includegraphics[width=0.63 \textwidth]{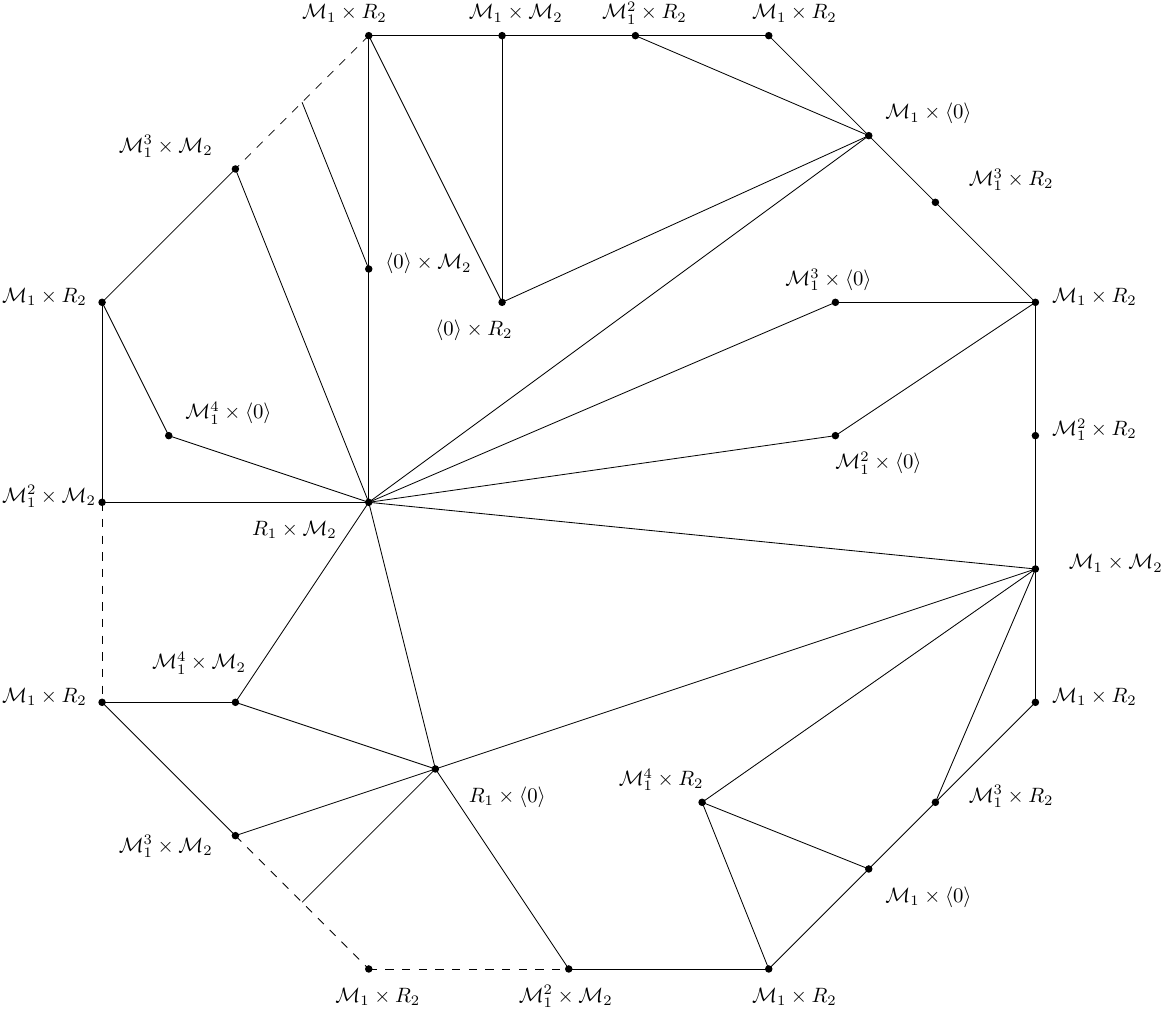}
\caption{Embedding of $\textnormal{PIS}(R_1 \times R_2)$ in $\mathbb{S}_2$, where $\eta(\mathcal{M}_1)= 5$ and $\eta(\mathcal{M}_2)= 2$}
\label{genus2fig3}
\end{figure}


  \newpage 
\textbf{Acknowledgement:} We would like to thank the referee for his/her valuable suggestions which helped us to improve the presentation of the paper. The first author gratefully acknowledge for providing financial support to Birla Institute of Technology and Science (BITS) Pilani, Pilani-333031, India.   



 \end{document}